\documentclass[reqno]{amsart}

\usepackage{amsfonts, amsmath, amsthm, graphicx, mathtools, comment, dsfont}

\usepackage{hyperref}

\usepackage{todonotes}

\usepackage{orcidlink}

\newcommand{\dd}{\mathrm{d}}
\newcommand{\R}{\mathbb{R}}
\newcommand{\p}{\mathbf{P}}
\newcommand{\E}{\mathbf{E}}
\newcommand{\conv}{\mathrm{conv}}
\newcommand{\Vol}{\mathrm{Vol}}

\newcommand{\bN}{\mathbf{N}}
\newcommand{\ind}{\mathds{1}}

\theoremstyle{plain}
\newtheorem{theorem}{Theorem}
\newtheorem{lemma}[theorem]{Lemma}

\newtheorem{corollary}[theorem]{Corollary}

\theoremstyle{definition}

\theoremstyle{remark}
\newtheorem{remark}{Remark}

\usepackage[nobysame,alphabetic,initials]{amsrefs}

\usepackage{xcolor}

\title{Higher moments of intrinsic volumes of random beta-prime polytopes}

\author[F.~Fodor]{Ferenc Fodor$^{\orcidlink{0000-0001-9747-1981}}$}
\address{Bolyai Institute,  University of Szeged, Aradi v\'ertan\'uk tere 1, H-6720 Szeged, Hungary}
\email{fodorf@math.u-szeged.hu}

\author[B.~Gr\"unfelder]{Bal\'azs Gr\"unfelder$^{\orcidlink{0009-0008-2447-3378}}$}
\address{Bolyai Institute, University of Szeged, Aradi v\'ertan\'uk tere 1, H-6720 Szeged, Hungary}
\email{grunfibalu@server.math.u-szeged.hu}

\author[P.~Kevei]{P\'eter Kevei$^{\orcidlink{0000-0001-5872-9644}}$}
\address{Bolyai Institute,  University of Szeged, Aradi v\'ertan\'uk tere 1, H-6720 Szeged, Hungary}
\email{kevei@math.u-szeged.hu}

\subjclass[2010]{Primary 60D05, Secondary 52A22, 60G55}

\keywords{Beta-prime distribution, Convex hull, Intrinsic volumes, Moment convergence, Random polytope, Poisson point process, Spherical geometry}

\begin{document}

\begin{abstract}
We consider beta-prime polytopes, i.e., the convex hulls of iid random points chosen according to beta-prime distributions in $\R^d$. 
After suitable scaling, beta-prime polytopes converge in distribution to the convex hulls of Poisson point processes with power-law intensity functions.  We prove moment convergence for the volume and all intrinsic volumes.
    
Beta-prime polytopes are the push-forwards of spherical random polytopes on the upper open half-sphere of the unit sphere $S^d\subset \R^{d+1}$. 
We prove convergence of moments of the spherical volume difference of the half-sphere and the spherical random polytopes.
\end{abstract}

\maketitle

\section{Introduction and results}

In this paper, the following two probability models are considered.
First, let $\Pi_{d,\alpha}$ be a Poisson point process on $\R^d\setminus\{0\}$ 
with the intensity measure 
$\nu_{d, \alpha}(\dd x) = f_{d,\alpha}(x) \dd x$ with respect to the Lebesgue measure, having 
a power-law density function 
\begin{equation} \label{eq:f-def}
f_{d,\alpha}(x) = |x|^{-d-\alpha}, \quad x\in\R^d\setminus\{0\}, 
\end{equation}
for some $\alpha>0$, where $|\cdot|$ denotes the Euclidean norm. For general information on Poisson point processes, we refer to the book \cite{LastPenrose} by Last and Penrose.

Second, let $\tilde\xi_1,\tilde\xi_2,\ldots$ be independent and identically distributed (iid) random points chosen according to the beta-prime distribution for a fixed $\beta>\frac d2$, i.e., their probability density functions with respect to the Lebesgue measure are
\begin{equation} \label{eq:tildef-def}    
\tilde f_{d,\beta}(x)=\tilde c_{d,\beta}(1+|x|^2)^{-\beta}, \quad x\in\R^d, \,\quad \text{ with }\,
\tilde c_{d,\beta}=\frac{\Gamma(\beta)}{\pi^{\frac d2}\Gamma(\beta-\frac d2)}, 
\end{equation}
where $\Gamma(\cdot)$ is Euler's gamma function.
With the notation $\conv{(P)}$ for the convex hull of a set of points $P\subset\R^d$, $\widetilde K_n^\beta=\conv(\{\tilde\xi_1,\ldots,\tilde\xi_n\})$ is called a random beta-prime polytope.
If it is necessary to indicate the dimension $d$ of the space from which the random points are selected, we write $\widetilde K_{n,d}^\beta=\widetilde K_n^\beta$. 
Regarding the terminology used in the paper, we note that 
in stochastic geometry the distribution with density $\tilde f_{d,\beta}$ is commonly called a beta-prime distribution. However, in
other areas of probability, beta-prime distribution or 
beta distribution of the second kind refers to a two-parameter
family of distributions.

The two models introduced above are related in the following way.
Let $\beta=\frac{d+\alpha}2$% for $\alpha>0$
, and rescale $\tilde\xi_1,\ldots,\tilde\xi_n$ by $n^{-\frac1\alpha}$. 
Then, by classical point process theory, see, e.g.~Proposition 3.21 in \cite{Resnick87}, 
the point process formed by $n^{-\frac{1}{\alpha}}\tilde\xi_1$, $\ldots$, $n^{-\frac{1}{\alpha}}\tilde\xi_n$ converges weakly to $\Pi_{d,\alpha}$, as $n\to\infty$, in the space of locally finite integer measures on $\R^d\setminus\{0\}$ endowed with the vague topology.
Furthermore, the corresponding convex hulls also converge; that is,
\begin{equation} \label{eq:convhull-conv}
n^{-\frac{1}{\alpha}}\widetilde K_n^\beta \stackrel{\mathcal{D}}{\longrightarrow}
\conv(\Pi_{d, \alpha}) \quad \text{as } \, n \to \infty,
\end{equation}
in the space of compact convex sets in $\R^d$ endowed with the Hausdorff metric; see, e.g.~\cite{DavisMulrowResnick}*{Theorem 3.1} and \cite{BroziusdeHaan}*{Theorem 2.1}.
Here and later on $\stackrel{\mathcal{D}}{\rightarrow}$ stands for convergence in distribution.

The volume (Lebesgue measure) of a $d$-dimensional convex body is denoted by $\Vol_d$, while for the $s$-dimensional volume ($s\leq d$), $\Vol_s$ will be used.
For a $d$-dimensional convex body $K \subset \R^d$, the intrinsic volumes $V_0(K), \dots, V_d(K)$ are defined as the coefficients in the Steiner formula
\[
\Vol_d(K + \varrho B^d) = \sum_{s=0}^{d} \kappa_{d-s} \, V_s(K) \, \varrho^{d-s}, \quad \varrho \ge 0,
\]
where $K+\varrho B^d$ is the Minkowski sum of $K$ and the $d$-dimensional ball of radius $\varrho$, and $\kappa_d$ denotes the volume of the $d$-dimensional unit ball.
Some intrinsic volumes have a direct geometric meaning, i.e., $V_d(K) = \Vol_d(K)$, $V_0(K)$ is the Euler characteristic, equal to $1$ for any convex body $K$, $V_1(K)$ is a constant multiple of the mean width, and $V_{d-1}(K)$ is half of the surface area.
The intrinsic volumes of $K$ satisfy Kubota's formula
\begin{equation}\label{eq:Kubota}
V_s(K)=C_{d,s} \int_{G(d,s)}\Vol_s (K|A) \nu_s(\dd A),
\end{equation}
where $G(d,s)$ denotes the Grassmannian of the $s$-dimensional linear subspaces of $\R^{d}$, $\nu_s$ is the unique rotation invariant probability measure on $G(d,s)$, $K|A$ denotes the orthogonal projection of $K$ onto an $s$-dimensional linear subspace $A$, and $C_{d,s}$ is a constant depending only on $d$ and $s$. For more information on convex bodies and intrinsic volumes, we refer to the book \cite{Sch14} by Schneider.

Asymptotic formulas for the expectation of intrinsic volumes 
of beta-prime polytopes were first established by Affentranger \cite{A91}.
More recently, Kabluchko et al.~proved that 
the $f$-vector of $n^{-\frac{1}{\alpha}} \widetilde K_n^\beta$
converges in distribution and in moments to the $f$-vector of $\conv(\Pi_{d,\alpha})$; see \cite{KMTT19}*{Theorems 2.3 and 2.4}.
Kabluchko et 
al.~in \cite{KTZ20} provided the expectation of the complete $f$-vector.
Our main result is the convergence of moments of the intrinsic volumes of the scaled beta-prime polytope.

\begin{theorem}\label{thm:moments}
Let $\alpha,\beta>0$ such that $\beta=\frac{d+\alpha}{2}$, and let $m <\alpha$. Then for any $s = 1,\ldots, d$    
\[
\lim_{n\to\infty} \E\big[ \big(
V_s(n^{-\frac{1}{\alpha}} \widetilde K_n^\beta)\big)^m\big]=
\E[ V_s(\conv (\Pi_{d,\alpha}))^m ].
\]
In particular, for $s = d$
\[
\lim_{n\to\infty} \E\big[ \big(
\Vol_d(n^{-\frac{1}{\alpha}} \widetilde K_n^\beta)\big)^m\big]=
\E[ \Vol_d(\conv (\Pi_{d,\alpha}))^m ].
\]
\end{theorem}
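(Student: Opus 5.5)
Since $n^{-\frac1\alpha}\widetilde K_n^\beta \to \conv(\Pi_{d,\alpha})$ in distribution by \eqref{eq:convhull-conv}, and $V_s$ is continuous with respect to the Hausdorff metric on compact convex sets, the continuous mapping theorem gives
\[
V_s(n^{-\frac1\alpha}\widetilde K_n^\beta)\stackrel{\mathcal D}{\longrightarrow} V_s(\conv(\Pi_{d,\alpha})).
\]
Convergence of the $m$-th moments then follows once the family $\{V_s(n^{-\frac1\alpha}\widetilde K_n^\beta)^m\}_n$ is uniformly integrable. For that it suffices to find some $m'\in(m,\alpha)$ with
\[
\sup_n \E\big[V_s(n^{-\frac1\alpha}\widetilde K_n^\beta)^{m'}\big]<\infty.
\]
So the whole proof reduces to this uniform moment bound, and the plan is to dominate $V_s$ by a power of the diameter.

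By monotonicity and homogeneity of intrinsic volumes, $V_s(K)\le V_s(R B^d)=R^s V_s(B^d)$ whenever $K\subset RB^d$. Take $R_n=n^{-\frac1\alpha}\max_{i\le n}|\tilde\xi_i|$. Then
\[
V_s(n^{-\frac1\alpha}\widetilde K_n^\beta)^{m'}\le c\,R_n^{sm'}.
\]
This bound is only useful if $sm'<\alpha$, which fails for $s\ge2$ when $m$ is close to $\alpha$. So the crude bound handles only $s=1$ (and small $m$), and something sharper is needed in general. This is the main obstacle.

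To get the sharper bound, I would use Kubota's formula \eqref{eq:Kubota} and Jensen's (or Hölder's) inequality, for $m'\ge1$:
\[
V_s(K)^{m'}\le C\int_{G(d,s)}\Vol_s(K|A)^{m'}\,\nu_s(\dd A).
\]
For $m'<1$ I would instead use subadditivity, or simply take $m'\ge1$ when $m\ge1$; the case $m<1$ is easier. The projection $K|A$ is the convex hull of the projected points, and these are iid beta-prime points in $A\cong\R^s$ with parameter $\beta-\frac{d-s}2=\frac{s+\alpha}2$, since projections of beta-prime distributions are again beta-prime. By rotation invariance the integrand does not depend on $A$. It therefore suffices to prove the case $s=d$, namely
\[
\sup_n\E\big[\Vol_d(n^{-\frac1\alpha}\widetilde K_n^\beta)^{m'}\big]<\infty\quad\text{for } m'<\alpha,
\]
in every dimension $d$.

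To bound the volume moment, I would avoid the diameter bound, which loses a factor of $d$ in the exponent. Instead I would control $\Vol_d$ by the product of the extents of $K$ in $d$ orthogonal directions. Concretely, $\Vol_d(K)\le\prod_{j=1}^d w_j$, where $w_j$ is the width of $K$ along the coordinate axis $e_j$, and $w_j\le 2\max_i|\langle\tilde\xi_i,e_j\rangle|$. Scaled, each $w_j$ behaves like a Fréchet variable with index $\alpha$. The worry is that a single huge point makes one $w_j$ large while the others stay $O(1)$, so the product should still have finite moments below $\alpha$.

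To make this rigorous, I would order the points by norm. The largest point $X_{(1)}$ contributes at most one large factor: the volume is at most the volume of the cone from $X_{(1)}$ over the projection of the hull of the remaining points. This gives
\[
\Vol_d(K)\le C\,\bigl(|X_{(1)}|+\mathrm{diam}\,K'\bigr)\,V_{d-1}\bigl(K'|X_{(1)}^\perp\bigr),
\]
where $K'$ is the hull of the remaining points. Iterating this peeling, and using that the $k$-th largest scaled norm $R_{(k)}$ has finite moments of all orders below $k\alpha$, I expect
\[
\Vol_d\le C\sum \prod_{k=1}^d R_{(k)}.
\]
Applying Hölder's inequality with exponents matched so that $R_{(k)}$ receives a moment below $k\alpha$, uniformly in $n$ via binomial tail estimates $\p(R_{(k)}>t)\le \binom nk \p(|\tilde\xi|>tn^{1/\alpha})^k\le Ct^{-k\alpha}$, then yields the uniform bound for $m'<\alpha$. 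Checking the Hölder bookkeeping in this final step is the delicate point.
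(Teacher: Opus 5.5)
Your reduction is the paper's: continuous mapping plus uniform integrability; Kubota's formula and Jensen's inequality, with projected beta-prime points again beta-prime with the same $\alpha$; and the pointwise bound $\Vol_d\le 2^dR_{(1)}\cdots R_{(d)}$ by the $d$ largest scaled norms. The gap is the last step, the one you flagged. H\"older's inequality fed only with the marginal bounds $\p(R_{(k)}>t)\le Ct^{-k\alpha}$ cannot reach exponents close to $\alpha$. Suppose $\E\bigl[\prod_{k=1}^d R_{(k)}^{m'}\bigr]\le\prod_{k=1}^d\E\bigl[R_{(k)}^{m'p_k}\bigr]^{1/p_k}$ with $\sum_k1/p_k=1$ and $m'p_k<k\alpha$. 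Then $1=\sum_k1/p_k>\frac{m'}{\alpha}H_d$, where $H_d=1+\frac12+\cdots+\frac1d$. So you get only $m'<\alpha/H_d$; for $d=2$ this is $m'<\frac23\alpha$.

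Better bookkeeping cannot fix this. Take the ordered family $R_{(k)}=U^{-1/(k\alpha)}$ with $U$ uniform on $(0,1)$. It satisfies the same marginal bounds, yet $\prod_{k\le d}R_{(k)}=U^{-H_d/\alpha}$ has infinite $m'$-th moment once $m'\ge\alpha/H_d$. So for $s\ge2$ your argument yields the theorem at best for $m<\alpha/H_s$.

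What is missing is control of the joint law of the top order statistics, uniformly in $n$: $\sup_n\p\bigl(R_{(1)}\cdots R_{(k)}>x\bigr)\le c(\log x)^{k-1}x^{-\alpha}$ for large $x$. This is Lemma~\ref{lemma:iid-d-tail}, which the paper proves by induction on $k$. The event $\{R_{(1)}\cdots R_{(k-1)}>x\}$ is handled by the induction hypothesis. On the remaining part of $\{R_{(1)}\cdots R_{(k)}>x\}$ one has $R_{(k)}>1$ and $R_{(1)}\le x$. Its probability is therefore at most $n^k$ times the corresponding probability for $k$ ordered iid copies, which is estimated by iterated integration against the tail $\p(|\tilde\xi|>t)\sim ct^{-\alpha}$.

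A shorter route, closer to your proposal, is the R\'enyi representation. Since $\p(|\tilde\xi|>t)\le Ct^{-\alpha}$ for all $t>0$, the order statistics can be realized so that $R_{(k)}\le\bigl(C\Gamma_{n+1}/(n\Gamma_k)\bigr)^{1/\alpha}$ for all $k$. Here $\Gamma_j=\varepsilon_1+\cdots+\varepsilon_j$ with iid standard exponentials $\varepsilon_i$. Using $\Gamma_k\ge\varepsilon_k$, the product is at most $C(\Gamma_{n+1}/n)^{d/\alpha}\prod_{k\le d}\varepsilon_k^{-1/\alpha}$. The independent factors $\varepsilon_k^{-1/\alpha}$ have finite moments of every order below $\alpha$. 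The factor $\Gamma_{n+1}/n$ has moments of every order bounded uniformly in $n$. A single H\"older step with $p>1$, $pm'<\alpha$, then gives the uniform bound.

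As a minor point, Jensen's inequality is unavailable when $\alpha\le1$, since then $m'<1$. You can bypass it by applying the volume bound to every projection: $\Vol_s(K|A)\le 2^sR_{(1)}\cdots R_{(s)}$, because projection does not increase norms. This gives $V_s\le CR_{(1)}\cdots R_{(s)}$ directly.
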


We note that lower and upper bounds of the same order of magnitude for the variance of the intrinsic volumes can also be proved by mainly geometric arguments; the proof of the lower bounds can be found in a previous version of this paper on arXiv \cite{FG26}.

\section{Random points on a half-sphere} \label{sect:halfsphere}

One of the motivations behind the above results is the study of convex hulls of random points on a half-sphere. Random beta-prime polytopes are connected to random spherical polytopes, which are convex hulls of iid random points in an (open) half-sphere, via the gnomonic projection.

For $d>2$, let $S^d$ denote the unit sphere in $\R^{d+1}$.
Random polytopes in (spherically) convex bodies in $S^d$ have recently been investigated, and several Euclidean results have been transferred to the spherical setting; see, for example, Besau
 et al.~\cite{BLW18}, Besau and Th\"ale \cite{BT20}, Kabluchko and Panzo \cite{KP25}. For a short overview, we refer to Fodor and Gr\"unfelder \cite{FG25}. However, random spherical polytopes generated by $n$ iid uniform random points chosen from an open half-sphere exhibit an essentially different behavior; the number of vertices of the convex hull  converges in distribution to a finite constant. This is due to the fact that the boundary of the half-sphere has zero geodesic curvature and thus the model has no natural analogue in the Euclidean setting. This type of behavior was first pointed out 
 by Fodor et al.~\cite{FKV14} for spindle convexity, and was generalized by Marynych and Molchanov \cite{MarMol}.
Uniform random spherical polytopes in open half-spheres were investigated by B\'ar\'any et al.~in \cite{BHRS17}.
More recently, Besau et al.~\cite{B24} investigated random spherical polytopes in wedges of $S^{d}$. 

Since the positive hull of a spherical polytope is a polyhedral cone, spherical random polytopes can also be considered in the context of random cones.  There are several models of random conical tessellations of $\R^d$ that produce, by intersection with the sphere, random spherical polytopes. The investigation of such models was started by Cover and Efron \cite{CE67}. For a detailed discussion of the results on random conical tessellations, random polyhedral cones, and their connections to random polytopes in half-spheres, we refer to the book \cite{S22} by Schneider.

Let $S^d_+=\{(x_1,\ldots,x_{d+1})\in S^d:x_{d+1}>0\}$
denote the upper open half-sphere, and  
consider the rotationally symmetric spherical probability distributions around the pole $N=(0,\ldots,0,1)$ with the probability density function $p_{d,\alpha}$: 
\[p_{d,\alpha}(x)=\hat c_{d,\alpha}\cdot x_{d+1}^{\alpha-1}, \quad x=(x_1,\ldots,x_{d+1})\in S^d_+,\quad \alpha>0, \]
where $\hat c_{d,\alpha}>0$ is a suitable normalization constant depending on $d$ and $\alpha$.
In the special case $\alpha=1$, $p_{d,1}$ is the density function of the uniform distribution in $S^d_+$.
Let $\widehat \xi_1, \widehat \xi_2, \ldots$ be iid random points in $S^d_+$ chosen according to the density $p_{d,\alpha}$. 
Let $\widehat K_n^\alpha$ denote the spherical convex hull of $\widehat \xi_1, \ldots, \widehat \xi_n$, i.e., the smallest spherically convex set that contains $\widehat \xi_1, \ldots, \widehat \xi_n$.
Consider the map $g:S^d_+\to\R^d$, which we call the gnomonic projection, defined as 
    \[g(x)=\Big(\frac{x_1}{x_{d+1}},\ldots,\frac{x_d}{x_{d+1}}\Big),\]
i.e., the radial projection from the origin to the tangent hyperplane at the North-pole $N$, after which the tangent hyperplane is identified with $\R^d$.
It is known that $g$ is bijective, takes spherical convex sets to convex sets in $\R^d$, and the push-forward of the spherical Lebesgue measure $\Vol_{S^d}$ under $g$ is the weighted volume measure in $\R^d$ with the radially symmetric density $\psi(x)=(1+|x|^2)^{-\frac{d+1}{2}}$. 
Moreover, the push-forward of the probability density function $p_{d,\alpha}$ is $\tilde f_{d,\beta}$ with 
$\beta=\frac{d+\alpha}2$, 
see \cite{BesauWerner}*{Proposition 4.2}.
Thus, the gnomonic image of a random spherical polytope 
is a beta-prime polytope. However, its volume is measured with respect to the density $\psi$.

Since the gnomonic projection preserves the facial structure of a convex polytope, the results of \cite{KTZ20} mentioned above yield the limit of the expected $f$-vector in the open half-sphere. Furthermore, as $n \to \infty$
\begin{equation}\label{eq:distr-conv}
    n^\frac{1}{\alpha}\Big(\frac{\omega_{d+1}}2-\Vol_{S^d}(\widehat K_n^\alpha)\Big) 
    \stackrel{\mathcal{D}}{\longrightarrow}
    \int_{(\conv(\Pi_{d,\alpha}))^c} |y|^{-d-1} \,\dd y,
\end{equation}
where $\Vol_{S^d}$ is the spherical volume (Lebesgue measure), $\omega_{d+1}=\Vol_{S^d}(S^d)$, and $X^c=\R^d\setminus X$ denotes the complement of $X$ with respect to $\R^d$.
For $\alpha = 1$ this is exactly \cite{KMTT19}*{Theorem~2.6},
but the same proof works for general $\alpha$.
We prove moment convergence for any $m>0$.

\begin{theorem}\label{thm:spherical}
Let $\alpha>0$. Then, for any $m>0$,
    \[
    \lim_{n\to\infty}\E\Big[\Big(n^\frac{1}{\alpha}\Big(\frac{\omega_{d+1}}2-\Vol_{S^d}(\widehat K_n^\alpha)\Big)\Big)^m\Big] =
    \E\bigg[\bigg(\int_{(\conv(\Pi_{d,\alpha}))^c} |y|^{-d-1} \,\dd y\bigg)^m\bigg].
    \]
\end{theorem}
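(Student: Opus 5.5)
Since \eqref{eq:distr-conv} already gives convergence in distribution, I will show that the sequence $Y_n:=n^{\frac1\alpha}\big(\frac{\omega_{d+1}}2-\Vol_{S^d}(\widehat K_n^\alpha)\big)$ is bounded in $L^{m'}$ for every $m'>0$. Then $\{Y_n^m\}$ is uniformly integrable for each $m>0$, and moment convergence follows from the standard fact that convergence in distribution plus uniform integrability implies convergence of moments. The limit is finite because a.s.\ $0\in\operatorname{int}\conv(\Pi_{d,\alpha})$, so the integrand $|y|^{-d-1}$ is integrable over the complement. For the bound I will use the tail-integral representation $\E[Y_n^{m'}]=m'\int_0^\infty t^{m'-1}\p(Y_n>t)\,\dd t$ and aim for a tail bound $\p(Y_n>t)\le C e^{-c t^{\alpha}}$, or at least a bound decaying faster than every power of $t$, uniformly in $n$ for $t\le t_{\max}n^{1/\alpha}$. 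The values of $t$ beyond this range are trivial, since $Y_n\le n^{1/\alpha}\omega_{d+1}/2$.

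To get the tail bound, I pass to the gnomonic picture. We have $\frac{\omega_{d+1}}2-\Vol_{S^d}(\widehat K_n^\alpha)=\int_{(\widetilde K_n^\beta)^c}\psi(x)\,\dd x$. After the substitution $x=n^{1/\alpha}y$, and using $\psi(n^{1/\alpha}y)\le n^{-(d+1)/\alpha}|y|^{-d-1}$, this gives $Y_n\le \int_{(K_n)^c}|y|^{-d-1}\dd y$ with $K_n=n^{-1/\alpha}\widetilde K_n^\beta$. Now cover $S^{d-1}$ by finitely many caps and consider, for each direction $u$ in a fixed finite net, a cone $C_u$ of small fixed angle around $u$. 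Suppose that every cone $C_u$ contains a point of $K_n$'s generating set at norm at least $r$. Then $K_n\supset r' B^d$ for some $r'=c_0 r$, where $c_0$ depends only on the net. Consequently $Y_n\le\int_{|y|>r'}|y|^{-d-1}\dd y = C/r'$. So $\{Y_n>t\}$ forces some cone $C_u$ to contain no scaled point of norm $\ge c/t$. The probability that a single $\tilde\xi_i$ lands in $C_u\cap\{|x|\ge n^{1/\alpha}c/t\}$ is, by the beta-prime tail with $\beta=\frac{d+\alpha}2$, at least $c_1 t^{\alpha}/n$ for $t\le t_{\max}n^{1/\alpha}$. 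Hence
\[
\p(Y_n>t)\le N\Big(1-\frac{c_1t^\alpha}{n}\Big)^n\le N e^{-c_1t^\alpha},
\]
which is integrable against $t^{m'-1}$ uniformly in $n$.

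The main obstacle is the geometric step: that having points far out in every small cone forces a large inscribed ball. A point in $C_u$ with norm $\ge r$ has inner product with $u$ at least $r\cos\theta$. If $\theta$ is small relative to the net's mesh, the support function satisfies $h_{K_n}(v)\ge r\cos(2\theta)$ for every unit $v$. However, a support-function lower bound in all directions does not by itself give an inscribed ball when $0\notin K_n$. To handle this I will use the points themselves. The convex hull of $N$ points, one in each cone, at norms between $r$ and arbitrarily large, contains $c_0 r B^d$ once the cones are narrow enough. One way to see this is to take a net of $2d$ or more cones positioned near a cross-polytope's vertices, refined enough that any selection of points in these cones positively spans a simplex containing a ball. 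The catch is that the points could have wildly different norms, which skews the hull. To control this I will take, in each cone, the point in the truncated cone $\{r\le|x|\le 2r\}$ region's requirement instead, replacing the event by "each truncated cone $C_u\cap\{r\le |x|\le 2r\}$ contains a point". This event still has probability bounded as above, because the beta-prime mass of such an annular sector is also $\asymp t^\alpha/n$. With comparable norms, the hull clearly contains $c_0 rB^d$, which closes the argument.
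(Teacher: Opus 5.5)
Your proposal is correct and follows essentially the paper's route, namely distributional convergence \eqref{eq:distr-conv} plus uniform integrability from a uniform tail bound $\p(Y_n>t)\le Ce^{-ct^{\alpha}}$. As in Lemma~\ref{lemma:spherical-iid} and \eqref{eq:conv-bound-aux}, this bound comes from the fact that if each cone of a fixed finite family contains a scaled point far out, then $K_n$ contains a large centered ball; the trivial bound $Y_n\le n^{1/\alpha}\omega_{d+1}/2$ handles the rest, once you take $t_{\max}\ge\omega_{d+1}/2$ (allowed, since $c_1$ may depend on $t_{\max}$).

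The worry in your last paragraph is unfounded, so the truncation to annular sectors is unnecessary, though harmless. If $h_{K_n}(v)\ge\rho>0$ for every unit $v$, then $\rho B^d\subset K_n$ automatically, because a closed convex set is the intersection of its supporting half-spaces. This bound also already forces $0\in\mathrm{int}\,K_n$, so your case $0\notin K_n$ cannot occur.
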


The remainder of the paper is structured as follows.
In Section~\ref{sec:Poisson}, we prove two lemmas regarding Poisson point processes with power-law intensity functions.
The first, Lemma~\ref{lemma:Poisson-d} provides tail estimates of the volume of the convex hull, 
while Lemma~\ref{lemma:PPP-sphericalvol} proves tail estimates for the right-hand side of \eqref{eq:distr-conv}. These results are of independent interest.
In Section~\ref{sec:IID}, we prove the iid counterparts of these lemmas, which imply 
the uniform integrability of the relevant variables. In Lemma \ref{lemma:intrinsic-unifint},
we extend the uniform integrability from the volume to any intrinsic volume using
Kubota's formula. As a consequence of the uniform integrability results, the proofs 
of Theorems \ref{thm:moments} and \ref{thm:spherical} follow.

Finally, we note that the results can be extended to more general regularly varying random vectors.
However, to keep the presentation simple, we focus only on the geometrically relevant 
case.

\section{Tail asymptotics for the Poisson point process}
\label{sec:Poisson}

Let $\Pi_{d,\alpha}$ be a Poisson point process on $\R^d$ with intensity measure $\nu_{d, \alpha}$ with density $f_{d,\alpha}$
in \eqref{eq:f-def}. 
In \cite{KMTT19}*{Theorem~2.12}, explicit formulas for the expectation
of the $T$-functional of $\Pi_{d,\alpha}$ are provided, which include, for instance, the volume and the number faces. 
Here, we need the tail asymptotics
of the relevant quantities of $\Pi_{d,\alpha}$.
We first determine the order of the tail 
of $\Vol_d(\conv(\Pi_{d,\alpha}))$.
In what follows, $C, c > 0$ stand for suitable constants and can change from 
line to line. 

\begin{lemma} \label{lemma:Poisson-d}
There exist $0 < c < C$ such that for any $x > 0$,
\[
c \frac{(\log x)^{d-1}}{x^\alpha} \leq 
\p ( \Vol_d(\conv(\Pi_{d,\alpha})) > x ) \leq 
C \frac{(\log x)^{d-1}}{x^\alpha}.
\]
\end{lemma}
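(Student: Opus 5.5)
The tail is governed by the product of the $d$ largest norms of the points of $\Pi_{d,\alpha}$. I would bound the volume above and below by a constant times this product, and then compute the tail of the product exactly. Throughout, $x$ is large (say $x\ge e$), since for bounded $x$ the statement only involves adjusting constants; the lower bound is in any case only meaningful once $\log x>0$.

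\emph{Polar representation.} In polar coordinates the intensity $|y|^{-d-\alpha}\,\dd y$ factorizes as $r^{-1-\alpha}\,\dd r\,\sigma(\dd u)$. Hence $\Pi_{d,\alpha}$ consists of points $R_k u_k$ where:
\begin{itemize}
\item the directions $u_k$ are iid uniform on $S^{d-1}$ and independent of the radii;
\item the radii can be written as $R_k=(\omega\Gamma_k/\alpha)^{-1/\alpha}$, with $\omega=\Vol_{d-1}(S^{d-1})$ and $\Gamma_1<\Gamma_2<\cdots$ the arrival times of a unit-rate Poisson process on $(0,\infty)$.
\end{itemize}
Since the arrival times are increasing, $R_1>R_2>\cdots$. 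The vector $(\Gamma_1,\dots,\Gamma_d)$ has density $e^{-\gamma_d}$ on $\{0<\gamma_1<\dots<\gamma_d\}$. I would then show that
\[
\p(\Gamma_1\cdots\Gamma_d<\varepsilon)\asymp \varepsilon(\log(1/\varepsilon))^{d-1}, \qquad \varepsilon\to0 .
\]
\begin{itemize}
\item For the lower bound on this probability, restrict to $\gamma_d\le 1$ and compute the Lebesgue measure of $\{\gamma\in(0,1)^d:\prod\gamma_i<\varepsilon\}$. The substitution $t_i=-\log\gamma_i$ turns this into an integral of $e^{-\sum t_i}$ over $\{\sum t_i>\log(1/\varepsilon)\}$, which equals $\varepsilon\sum_{j<d}(\log(1/\varepsilon))^j/j!$.
\item For the upper bound, condition on $\gamma_d$ (using the factor $e^{-\gamma_d}$) and apply the same estimate to $\gamma_1\cdots\gamma_{d-1}<\varepsilon/\gamma_d$. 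The region $\gamma_d$ large is harmless because the $\Gamma_i$ are ordered.
\end{itemize}
Since $R_1\cdots R_d=(\omega/\alpha)^{-d/\alpha}(\Gamma_1\cdots\Gamma_d)^{-1/\alpha}$, this gives
\[
\p(R_1\cdots R_d>x)\asymp x^{-\alpha}(\log x)^{d-1}.
\]
I expect this computation to be the main technical step, mainly in keeping track of the exact power of the logarithm in the upper bound.

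\emph{Upper bound.} I would prove a deterministic inequality: for any point set whose norms, ordered decreasingly, are $r_1\ge r_2\ge\cdots$,
\[
\Vol_d(\conv)\le C_d\, r_1\cdots r_d .
\]
\begin{enumerate}
\item All points other than $x_1,\dots,x_{d-1}$ lie in $r_dB^d$, so the hull is contained in $\conv(0,x_1,\dots,x_{d-1})+r_dB^d$.
\item By the Steiner formula, the volume of this set is $\sum_j\kappa_{d-j}V_j(S)\,r_d^{d-j}$, where $S=\conv(0,x_1,\dots,x_{d-1})$.
\item $V_j(S)$ is at most a constant times a sum of $j$-volumes of faces of $S$, each bounded by a product of $j$ of the norms. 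Hence $V_j(S)\le C\,r_1\cdots r_j$.
\item Combining, and using $r_{j+1},\dots,r_d\ge r_d$, every term is at most $C\,r_1\cdots r_d$.
\end{enumerate}
The upper tail bound then follows immediately from the tail of $R_1\cdots R_d$.

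\emph{Lower bound.} The process has infinitely many points in every cone near the origin, so almost surely $0\in\conv(\Pi_{d,\alpha})$. Hence
\[
\Vol_d(\conv(\Pi_{d,\alpha}))\ge \Vol_d(\conv(0,R_1u_1,\dots,R_du_d))=\frac{R_1\cdots R_d\,|\det(u_1,\dots,u_d)|}{d!}.
\]
Choose $\delta>0$ such that $\p(|\det(u_1,\dots,u_d)|\ge\delta)\ge\frac12$. Independence of the directions and the radii then gives
\[
\p(\Vol_d>x)\ge\tfrac12\,\p\big(R_1\cdots R_d>d!\,x/\delta\big)\ge c\,x^{-\alpha}(\log x)^{d-1}.
\]
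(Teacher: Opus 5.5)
Your proof is correct, but it takes a genuinely different route from the paper's in both directions.

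\emph{Lower bound.} The paper does not use the $d$ points of largest norm. It fixes $d$ disjoint narrow cones $C(e_k,\theta)$ and lets $U_k$ be the largest norm in the $k$-th cone. Disjointness makes $U_1,\dots,U_d$ iid with $\p(U_k>r)\sim c\,r^{-\alpha}$. Because the directions are pinned near $e_1,\dots,e_d$, one gets $\Vol_d\ge c_{\theta,d}U_1\cdots U_d$ deterministically. The tail of this product is then quoted from Jessen--Mikosch. You instead keep the $d$ globally largest points and control the determinant probabilistically, using that the iid uniform directions are independent of the ordered radii.

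\emph{Upper bound.} Both arguments reduce to the product of the $d$ largest norms. The paper simply asserts $X\le 2^dV_1\cdots V_d$. Your Steiner argument proves this with some constant $C_d$, and a box in a Gram--Schmidt frame adapted to $x_1,\dots,x_{d-1}$ gives exactly $2^d$. The probabilistic step differs. The paper gets the tail of $V_1\cdots V_k$ from the multivariate Mecke formula and an induction on $k$, splitting according to whether $V_1\cdots V_{k-1}>r$. You compute it directly from $R_k=c\,\Gamma_k^{-1/\alpha}$ and the explicit density of $(\Gamma_1,\dots,\Gamma_d)$.

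\emph{What each route buys.} Yours is self-contained and unified. A single explicit computation gives matching upper and lower bounds (and, if wanted, the exact asymptotic constant) for the tail of one product, with no external lemma on products of regularly varying variables. The paper's Mecke/induction scheme is chosen because it carries over almost verbatim to the binomial setting of Lemma~\ref{lemma:iid-d-tail}. There the ordered norms are no longer exactly $c\,\Gamma_k^{-1/\alpha}$, and a bound uniform in $n$ is needed.

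Two minor points. First, $\Gamma_k=\frac{\omega}{\alpha}R_k^{-\alpha}$, so $R_k=(\alpha\Gamma_k/\omega)^{-1/\alpha}$ rather than $(\omega\Gamma_k/\alpha)^{-1/\alpha}$; only constants are affected. Second, your remark that bounded $x$ just requires adjusting constants is only valid on $[x_0,\infty)$ with $x_0>1$. For $d\ge2$ the right-hand side of the upper bound vanishes at $x=1$ while $\p(X>1)>0$. So the statement must be read for large $x$, which is also all the paper's proof establishes.
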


\begin{proof}
Let $(\xi_i)$ denote the points of $\Pi_{d,\alpha}$ 
and put $X = \Vol_d(\conv(\Pi_{d,\alpha}))$.

\emph{Lower bound.}
Let $e_1,\ldots, e_d$ be the standard orthonormal basis in $\R^d$. 
For a vector $z\in\R^d\setminus\{0\}$ and $\varphi\in[0,\frac\pi2]$, let $C(z,\varphi)$ denote the closed circular cone with axis $\{tz:t\geq0\}$ and angle $\varphi$.
Let $\theta\in(0,\frac\pi5)$ and $S_k^\theta=C(e_k,\theta)$, and 
$U_k=\max\{|\xi_i|:\xi_i\in S_k^\theta\}$ 
for $k=1,\ldots,d$.
Since $0 \in \conv(\Pi_{d,\alpha})$ almost surely, 
\[ 
X = \Vol_d(\conv(\Pi_{d,\alpha})) 
\geq c_{\theta,d}\, U_1\ldots U_d,
\] 
for a constant $c_{\theta,d}$ that depends only on $\theta$ and $d$.

The variables $U_1,\ldots,U_d$ are iid and
\[
\begin{split}
\p ( U_1 > r) &  = 1  - 
e^{-\nu_{d, \alpha}( S_1^\theta \cap (r B^d)^c )} \sim 
\nu_{d, \alpha}( S_1^\theta \cap (r B^d)^c ) \\
& = \gamma(\theta,d) \int_r^\infty u^{-\alpha-1} \, \dd u 
= \frac{\gamma(\theta,d)}{\alpha} r^{-\alpha}
\quad \text{as } \, r \to \infty,
\end{split}
\]
where $\gamma(\theta,d)$ is the spherical Lebesgue measure of $S_1^\theta \cap S^{d-1}$ and $\sim$ denotes the asymptotic equality of functions.
Thus, by Lemma 4.1 (4) in \cite{JessenMikosch}, for some $c > 0$,
as $x \to \infty$
\[
\p (X > x ) \geq \p \Big( U_1 \ldots U_d > \frac {x}{c_{\theta, d}}\Big) \sim 
c \frac{(\log x)^{d-1}}{x^{\alpha}}.
\]

\emph{Upper bound.}
Let $\mu_1$ denote the Poisson point process $\{ |\xi_i|: i\geq 1 \}$ on
$(0,\infty)$. The intensity measure $\eta$ of $\mu_1$ is given by 
\[
\overline \eta (r) := 
\eta((r,\infty)) = \nu_{d, \alpha}( (rB^d)^c ) = \frac{\omega_d}{\alpha} r^{-\alpha}, 
\]
where $\omega_d $ is the surface area of the unit ball $B^d$.
Let $V_1 > V_2 > \ldots$
denote the points of $\mu_1$ in decreasing order. Then 
\begin{equation} \label{eq:Poi-d-upper}
X \leq 2^d V_1 \ldots V_d.
\end{equation}
Let $\bN$ denote the set of point measures on $(0,\infty)$. 
Fix $r > 0$.
Define $h: (0,\infty)^k \times \bN \to (0,\infty)$ as
\begin{equation*} %\label{eq:def-f}
\begin{split}
h(x_1, \ldots, x_k, \mu ) = 
\ind(x_1 > \ldots > x_k, x_1 \ldots x_k > r,
x_i \in \mu, \mu([x_k, \infty)) = k ),
\end{split}
\end{equation*}
that is, $h$ is the indicator of the event that $x_1 > \ldots > x_k$
are the $k$ largest points of $\mu$, and their product 
is larger than $r$. Then
\[
\int_{(0,\infty)^k} h(x_1, \ldots, x_k, \mu_1) 
\mu_1^{(k)} (\dd x_1, \ldots, \dd x_k) = 
\ind ( V_1 \ldots V_k > r),
\]
where $\mu_1^{(k)}$ stands for the $k$th factorial measure 
of $\mu_1$. By the multivariate Mecke formula 
(\cite{LastPenrose}*{Theorem 4.4})
\begin{equation} \label{eq:mecke-aux1}
\begin{split}
& \p ( V_1 \ldots V_k > r) \\
& = 
\E \bigg[ \int_{(0,\infty)^k} h(x_1, \ldots, x_k, \mu_1) \mu_1^{(k)}
\dd (x_1, \ldots, x_k) \bigg] \\
& = \int \ldots \int_{(0,\infty)^k}
\E [ h(x_1, \ldots, x_k, \mu_1 + \delta_{x_1} + \ldots 
+ \delta_{x_k}) ] \eta(\dd x_1) \ldots \eta(\dd x_k).
\end{split}
\end{equation}
We have 
\[
\begin{split}
& \E [ h(x_1, \ldots, x_k, \mu_1 + \delta_{x_1} + \ldots 
+ \delta_{x_k}) ] \\
& = \ind( x_1 > \ldots > x_k, x_1 \ldots x_k > r) 
\p ( \mu_1 ([x_k, \infty) )= 0) \\
& = \ind( x_1 > \ldots > x_k, x_1 \ldots x_k > r) 
e^{-\overline \eta(x_k)}.
\end{split}
\]
Substituting back into \eqref{eq:mecke-aux1}
\begin{equation}  \label{eq:mecke}
\begin{split}
& \p ( V_1 \ldots V_k > r) \\
& = 
\int \ldots \int_{(0,\infty)^k} 
\ind(x_1 > \ldots > x_k, x_1 \ldots x_k > r)
e^{-\frac{\omega_d}{\alpha} x_k^{-\alpha}} \omega_d^k 
\prod_{i=1}^k x_i^{-\alpha -1} \dd x_i.
\end{split}
\end{equation}
To ease notation, put $x_{1:i} = x_1 \ldots x_i$.
We show that for $k= 1, 2, \ldots$
\begin{equation} \label{eq:V1d-ind}
\limsup_{r \to \infty} \frac{r^\alpha}{(\log r)^{k-1}} 
\p ( V_1 \ldots V_k > r) \leq \frac{\omega_d^{k}}{\alpha   (k-1)!}.
\end{equation}
Taking this for granted, the result follows by \eqref{eq:Poi-d-upper}.
From  \eqref{eq:mecke} with $k= 1$, we obtain
\[
\p ( V_1 > r ) \sim \frac{\omega_d}{\alpha} r^{-\alpha}
\quad \text{as } \, r \to \infty,
\]
thus \eqref{eq:V1d-ind} holds for $k=1$. Let $k \geq 2$ and assume
that \eqref{eq:V1d-ind} holds for $k-1$. 
Then
\[
\begin{split}
\p (V_1 \ldots V_k > r) &  
= \p( V_1 \ldots  V_k > r \geq V_1 \ldots V_{k-1} ) + \p ( V_1 \ldots V_{k-1} > r) \\
& = \p( V_1 \ldots  V_k > r \geq V_1 \ldots V_{k-1} ) + O\bigg(\frac{(\log r)^{k-2}}{r^\alpha}\bigg).
\end{split}
\]
For the first term above, we have
\[
\begin{split}
& \p( V_1 \ldots  V_k > r \geq V_1 \ldots V_{k-1} ) \\
&   = \int \ldots \int_{(0,\infty)^k} 
\ind(x_1 > \ldots > x_k, x_{1:k} > r \geq x_{1:(k-1)})
e^{- \frac{\omega_d}{\alpha} x_k^{-\alpha}} \omega_d^k 
\prod_{i=1}^k x_i^{-\alpha -1} \dd x_i  \\
& \leq \int \ldots \int_{[1,\infty)^k} 
\ind(r > x_1 > \ldots > x_k, x_{1:k} > r > x_{1:(k-1)})
\omega_d^k x_{1:k}^{-\alpha -1} \dd x_1 \ldots \dd x_k \\
& \leq \int \ldots \int_{[1,\infty)^{k-1}} 
\ind(r > x_1 > \ldots > x_{k-1})
\omega_d^{k} \alpha^{-1} x_{1:(k-1)}^{-1} r^{-\alpha} 
\dd x_1 \ldots \dd x_{k-1} \\
& \leq \frac{\omega_d^{k}}{\alpha} r^{-\alpha}
\int_{1}^r x_1^{-1} \dd x_1
\prod_{j=2}^{k-1} \int_{1}^{x_{j-1}} x_j^{-1} \dd x_j \\
& = \frac{\omega_d^{k}}{\alpha} r^{-\alpha}
\frac{(\log r)^{k-1}}{(k-1)!}.
\end{split}
\]
Thus \eqref{eq:V1d-ind} holds for $k$, and the result follows.
\end{proof}

\begin{remark}
We conjecture that the stronger statement 
$\p ( X > x ) \sim c \frac{(\log x)^{d-1}}{x^\alpha}$ 
as $x \to \infty$ also holds.
\end{remark}

Next, we obtain the tail asymptotics of the integral 
corresponding to the missed spherical volume in
Theorem \ref{thm:spherical}.

\begin{lemma} \label{lemma:PPP-sphericalvol}
Let 
\begin{equation*} %\label{eq:Y-def}
Y = \int_{(\conv(\Pi_{d,\alpha}))^c} |y|^{-d - 1} \dd y.    
\end{equation*}
Then there exist $c > 0$ and $C > 0$ such that for all $y > 0$
\[
\p ( Y > y ) \leq C e^{-c y^{\alpha}}.
\]
In particular, $\E [Y^m] < \infty$ for any $m > 0$.
\end{lemma}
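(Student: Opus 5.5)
The plan is to dominate $Y$ by a quantity that is small whenever $\conv(\Pi_{d,\alpha})$ contains a large enough neighbourhood of the origin, and then bound the probability that this fails.

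Fix a finite family of directions $u_1,\ldots,u_M\in S^{d-1}$ and a small angle $\theta$. Choose them so that if each cone $C(u_j,\theta)$ contains a point of $\Pi_{d,\alpha}$ at distance at least $R$ from the origin, then $\conv(\Pi_{d,\alpha})\supseteq \rho R B^d$ for a fixed constant $\rho=\rho(d,\theta)>0$. This is a standard compactness fact: points in every small cone of a fine covering have the origin deep in their convex hull. On that event,
\[
Y\le \int_{(\rho R B^d)^c}|y|^{-d-1}\,\dd y=\frac{\omega_d}{\rho R}.
\]
Hence, setting $R=\omega_d/(\rho y)$, we get
\[
\p(Y>y)\le \sum_{j=1}^M \p\big(\Pi_{d,\alpha}(C(u_j,\theta)\cap (RB^d)^c)=0\big)=M\exp\big(-\gamma(\theta,d)\alpha^{-1}R^{-\alpha}\big),
\]
using the computation $\nu_{d,\alpha}(C(u,\theta)\cap(RB^d)^c)=\gamma(\theta,d)R^{-\alpha}/\alpha$ already made in the proof of Lemma~\ref{lemma:Poisson-d}. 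Since $R^{-\alpha}=c'y^{\alpha}$, this gives $\p(Y>y)\le Me^{-cy^{\alpha}}$. For small $y$ the bound is trivial after enlarging $C$ so that $Ce^{-cy^\alpha}\ge1$ on $y\le1$. Finiteness of all moments then follows from $\E[Y^m]=\int_0^\infty my^{m-1}\p(Y>y)\,\dd y$.

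The only genuinely geometric step, and the main obstacle, is the covering claim. I would prove it as follows. Pick $u_j$ so that the caps $C(u_j,\theta)\cap S^{d-1}$ cover $S^{d-1}$ with $\theta<\pi/8$. Suppose $p_j\in C(u_j,\theta)$ with $|p_j|\ge R$, and let $K=\conv\{p_j\}$. We need $h_K(v)\ge\rho R$ for every unit vector $v$. Given $v$, choose $j$ with $\angle(v,u_j)\le\theta$. Then $\angle(v,p_j)\le2\theta<\pi/4$, so $h_K(v)\ge\langle p_j,v\rangle\ge R\cos(2\theta)$. Thus the claim holds with $\rho=\cos(2\theta)$. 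Note that no directional gap is missed, since the support-function criterion is equivalent to the ball inclusion.

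Because of this, I do not expect any real difficulty beyond bookkeeping of constants.
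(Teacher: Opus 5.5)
Your proof is correct and follows the same route as the paper. Both bound $Y\le\omega_d/U$, where $U$ is the radius of the largest ball about the origin inside $\conv(\Pi_{d,\alpha})$, and then take a union bound over finitely many cones that miss all points beyond radius $\asymp y^{-1}$, each with probability $\exp(-\gamma(\theta,d)\alpha^{-1}R^{-\alpha})$. The one difference is that the paper uses only the $2d$ coordinate cones $C(\pm e_i,\theta)$ and asserts the inclusion \eqref{eq:conv-bound-aux} without proof; for $d\ge 3$ that inclusion in fact needs $\theta<\arcsin(1/\sqrt d)$, not merely $\theta<\pi/5$, whereas your covering of $S^{d-1}$ by caps of angle $\theta<\pi/8$ proves the inclusion with the explicit constant $\rho=\cos 2\theta$.
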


\begin{proof}
Let $U = \mathrm{dist}(0, (\conv(\Pi_{d,\alpha}))^c)$, where 
$\mathrm{dist}(\cdot,\cdot)$ stands for the distance. Then
\begin{equation} \label{eq:Y-bound}
\begin{split}
Y & \leq \int_{(UB^d)^c} | y|^{-d - 1} \dd y 
%\\ & 
= \omega_d \int_U^\infty r^{-2} \dd r 
=  \omega_d U^{-1}.
\end{split}
\end{equation}
Furthermore,
\begin{equation} \label{eq:Poi-conv-auxU}
\begin{split}
\p (U \leq r )  & = \p ( rB^d \not \subset \conv(\Pi_{d,\alpha})) ) \\
& \leq 2 d  \p ( \Pi_{d, \alpha} ( C(e_1, \theta) 
\cap (c_\theta rB^d)^c)  = 0 ) \\
& = 2 d e^{- \nu_{\alpha,d}(S_1^\theta \cap (c_\theta rB^d)^c)} \\
& = 2 d \exp \Big\{ - \frac{\gamma(\theta, d)}{\alpha} 
(c_\theta r)^{-\alpha} \Big\},
\end{split}
\end{equation}
where we used that for $\theta \in (0,\frac{\pi}{5})$
there exists $c_\theta > 1$ such that
\begin{equation} \label{eq:conv-bound-aux}
\begin{split}
\{ rB^d \not \subset \conv(\Pi_{d,\alpha}) \} 
\subset 
& \cup_{i=1}^d \big( 
\{ \Pi_{d,\alpha} ( C(e_i, \theta) \cap (c_\theta rB^d)^c)  = 0 \} \\
&  
\cup \{ \Pi_{d,\alpha} ( C(-e_i, \theta) \cap (c_\theta rB^d)^c)  = 0 \} \big).
\end{split}
\end{equation}
Thus, combining \eqref{eq:Poi-conv-auxU} and \eqref{eq:Y-bound}, 
the statement follows.
\end{proof}

\section{Proofs of the main results} \label{sec:IID}

We first need a technical lemma on uniform integrability.

\begin{lemma} \label{lemma:iid-d-tail}
Let $Y, Y_1, \ldots$ be iid random variables such that 
$\p ( Y > x) \sim a x^{-\alpha}$, as $x \to \infty$,
for some $a > 0$.
Let $Y_{1,n} \geq \ldots \geq Y_{n,n}$ denote the order statistics
of $Y_1, \ldots, Y_n$. Then for any $k \geq2$, there exists $c > 0$
such that for all $x > 0$
\[
\sup_{n \geq 1} \p \big( Y_{1,n} \ldots Y_{k,n} > 
n^{\frac k\alpha} x \big) 
\leq c \frac{(\log x)^{k-1}}{x^\alpha}.
\]
\end{lemma}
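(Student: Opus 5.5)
We mimic the Mecke-formula argument from the upper bound in Lemma~\ref{lemma:Poisson-d}, replacing the Poisson process by the binomial process and working directly with the order-statistic density. First we reduce to a convenient tail. Since $\p(Y>x)\sim a x^{-\alpha}$, there is $x_0\ge 1$ with $\bar F(x):=\p(Y>x)\le 2a x^{-\alpha}$ for $x\ge x_0$. Replacing $Y_i$ by $\max(Y_i,x_0)$ only increases the product of the top $k$ order statistics. Hence we may assume $Y_i\ge x_0\ge1$ and $\bar F(x)\le C x^{-\alpha}$ for all $x$. The claim is trivial for $x$ bounded (take $c$ large), so we only consider $x\ge e$.

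Next we bound the tail probability by a union over index sets. On the event $\{Y_{1,n}\cdots Y_{k,n}>n^{k/\alpha}x\}$ there are distinct indices $i_1,\dots,i_k$ with $Y_{i_1}\ge\dots\ge Y_{i_k}$ and $Y_{i_1}\cdots Y_{i_k}>n^{k/\alpha}x$. The union bound then gives
\[
\p\big(Y_{1,n}\cdots Y_{k,n}>n^{k/\alpha}x\big)\le \binom nk k!\,\p\big(Y_1\ge\dots\ge Y_k,\ Y_1\cdots Y_k>n^{k/\alpha}x\big)\le n^k\,\p\big(Z_1\cdots Z_k>n^{k/\alpha}x\big),
\]
where $Z_1\ge\dots\ge Z_k$ are the decreasing rearrangement of $Y_1,\dots,Y_k$. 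After rescaling $W_i=n^{-1/\alpha}Y_i$, we have $\p(W_1>t)\le C n^{-1}t^{-\alpha}$. It therefore suffices to show
\[
\p\big(W_1\cdots W_k>x\big)\le c\, n^{-k}\frac{(\log x)^{k-1}}{x^{\alpha}},
\]
with $c$ independent of $n$. This is the iid analogue of \eqref{eq:V1d-ind}, with $\eta(\dd u)$ replaced by the law of $W$, which has total tail mass $O(n^{-1})$ at scale $t^{-\alpha}$.

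We prove this by induction on $k$, following the recursion in the proof of Lemma~\ref{lemma:Poisson-d}. Write $G_n(t)=\p(W>t)\le \min(1,Cn^{-1}t^{-\alpha})$. The case $k=1$ is immediate. For the inductive step, condition on $W_1,\dots,W_{k-1}$ and set $s=W_1\cdots W_{k-1}$. Then
\[
\p(W_1\cdots W_k>x)\le \E\big[G_n(x/s)\big]\le Cn^{-1}x^{-\alpha}\,\E\big[s^{\alpha}\ind(s\le x)\big]+\p(s>x).
\]
This is where the gain of the logarithm occurs. The key bound is a truncated moment estimate,
\[
\E\big[s^\alpha\ind(s\le x)\big]\le \alpha\int_0^x u^{\alpha-1}\p(s>u)\,\dd u\le Cn^{-(k-1)}(\log x)^{k-1},
\]
obtained from the induction hypothesis since the integrand is $O(n^{-(k-1)}(\log u)^{k-2}/u)$. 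One has to handle small $u$, where $\p(s>u)$ is only bounded by $1$; this is the technical point. I would carry a uniform bound of the form $\p(W_1\cdots W_j>u)\le \min\{1, C n^{-j}u^{-\alpha}(1+\log^+ (u n^{j/\alpha}))^{j-1}\}$, or alternatively work with unrescaled variables bounded below by $x_0$, so that the logarithms are anchored at $1$.

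I expect the main obstacle to be the bookkeeping needed to make the constants independent of $n$, that is, keeping the logarithm as $\log x$ rather than $\log(n^{k/\alpha}x)$. The fix I would try first is to avoid rescaling and instead mimic exactly the integral estimate in the proof of \eqref{eq:V1d-ind}. Use the density bound $\p(Y\in \dd u)\le$ (tail-integrable) together with $\bar F(u)\le Cu^{-\alpha}$. Bound the contribution where the top $k-1$ factors already have product exceeding $n^{k/\alpha}x\,n^{-1/\alpha}$-scaled thresholds by the induction hypothesis. For the remaining region, the integral over $x_1>\dots>x_{k-1}$ with $x_{1:(k-1)}\le r=n^{k/\alpha}x$ produces a factor $\log$ of the ratio $r/n^{(k-1)/\alpha}$-type quantities. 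Because each of the $k$ points contributes a factor $n$ and a factor $n^{-1}$ from the scaled tail, these cancel, and the surviving logarithm is $\log x$ up to additive constants.
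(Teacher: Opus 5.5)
There is a genuine gap. Your reduction ``it suffices to show $\p(W_1\cdots W_k>x)\le c\,n^{-k}(\log x)^{k-1}x^{-\alpha}$'' asks for an inequality that is false, so no bookkeeping can complete the argument. Take exact Pareto, $\p(Y>y)=y^{-\alpha}$ for $y\ge1$. Then $\log(Y_1\cdots Y_k)$ is Gamma$(k,\alpha)$ (shape $k$, rate $\alpha$), and
\[
n^k\,\p\big(Y_1\cdots Y_k>n^{k/\alpha}x\big)=x^{-\alpha}\sum_{j=0}^{k-1}\frac{(k\log n+\alpha\log x)^j}{j!}.
\]
This is unbounded in $n$ for fixed $x$, already for $k=2$. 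Equivalently, the truncated-moment step of your induction fails: $\E[W_1^\alpha\ind(W_1\le x)]=n^{-1}(\log n+\alpha\log x)$, because $\int y^\alpha\,\dd F(y)$ collects a logarithm from the whole range $[1,n^{1/\alpha}x]$.

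The powers of $n$ do cancel in your reduction, but a $\log n$ survives, and it comes from the crude union bound over all $k$-subsets. That bound counts every subset in which one $Y_i$ is huge and the others lie anywhere in $[1,n^{1/\alpha}]$, with each logarithmic scale of that range contributing equally. Yet values much smaller than $n^{1/\alpha}$ are essentially never among the top $k$ order statistics. Your opening plan, to work with the order-statistic density, would have kept the factor $F(y_k)^{n-k}\le e^{-(n-k)\overline F(y_k)}$. This is the binomial analogue of $e^{-\overline\eta(x_k)}$ in \eqref{eq:mecke}, and it suppresses $y_k\ll n^{1/\alpha}$; the union bound throws exactly this factor away. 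The same objection applies to the induction hypothesis you carry, a bound on the iid product $W_1\cdots W_{k-1}$, which is false for $k-1\ge2$.

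The missing idea is the paper's argument, which your last paragraph only gestures at. You should induct on the order-statistic statement itself and split before applying the union bound. With $a_n=n^{1/\alpha}$,
\[
\p(Y_{1,n}\cdots Y_{k,n}>a_n^kr)\le\p(Y_{1,n}\cdots Y_{k-1,n}>a_n^{k-1}r)+\p\big(Y_{1,n}\cdots Y_{k,n}>a_n^kr,\ Y_{1,n}\cdots Y_{k-1,n}\le a_n^{k-1}r\big).
\]
The first term is handled by the induction hypothesis. On the second event, $Y_{k,n}>a_n$ and $Y_{1,n}\le a_nr$, so all top $k$ values lie in $(a_n,a_nr]$. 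Only at this point is the union bound with factor $n^k$ applied. The tail of the last variable gives $\overline F(a_n^kr/y_{1:(k-1)})\le Cn^{-k}r^{-\alpha}y_{1:(k-1)}^\alpha$, which cancels the $n^k$. The iterated integrals then satisfy
\[
\int_{(a_n,z]}y^\alpha\big(1+\log(y/a_n)\big)^i\,\dd F(y)\le c\big(1+\log(z/a_n)\big)^{i+1},
\]
so they are anchored at $a_n$, and the result is $c\,r^{-\alpha}(1+\log r)^{k-1}$ uniformly in $n$. It is this lower truncation at $a_n$, not the cancellation of the factors of $n$, that removes the $\log n$.
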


\begin{proof}
We follow the proof of the Poisson case.
Put $a_n = n^{\frac1\alpha}$.
We prove by induction that for some $c_k> 0$
\begin{equation}\label{eq:iid-ind}
\limsup_{r \to \infty} \, \sup_n \frac{r^\alpha}{(\log r)^{k-1}} 
\p ( Y_{1,n} \ldots Y_{k,n} > a_n^k r ) \leq c_k.
\end{equation}
For $k=1$ \eqref{eq:iid-ind} holds, and assume it for $k-1$ with $k \geq 2$.
Then,
\[
\begin{split}
\p ( Y_{1,n} \ldots Y_{k,n} > a_n^k r ) & \leq  
\p ( Y_{1,n} \ldots Y_{k,n} > a_n^k r, Y_{1,n} \ldots Y_{k-1,n} \leq  a_n^{k-1} r ) \\ 
& \quad +
\p ( Y_{1,n} \ldots Y_{k-1,n} > a_n^{k-1} r ),
\end{split}
\]
where the last term is $O((\log r)^{k-2} r^{-\alpha})$ by the induction 
hypothesis. To ease notation, put $y_{1:i} = y_1 \ldots y_i$, $i = 1,2,\ldots$.
For the first term, note that $\{ Y_{1,n} \ldots Y_{k,n} > a_n^k r \}$
and $\{ Y_{1,n} \ldots Y_{k-1,n} \leq  a_n^{k-1} r \}$ together imply 
that $Y_{k,n} \geq a_n$ and $Y_{1,n} \leq a_n r$. Therefore,
putting $F(x) = \p ( Y \leq x)$,
we have
\begin{equation} \label{eq:iid-d-tail-aux1}
\begin{split}
& 
\p ( Y_{1,n} \ldots Y_{k,n} > a_n^k r, 
Y_{1,n} \ldots Y_{k-1,n} \leq  a_n^{k-1} r ) \\
& \leq n^k \, \p ( Y_1 \ldots Y_k > a_n^k r, Y_1 \ldots Y_{k-1} \leq a_n^{k-1} r,
Y_1 > Y_2 > \ldots > Y_k ) \\
& \leq n^k  
\int_{(a_n r^{\frac1k}, a_n r]} \dd F(y_1) 
\int_{(a_n, y_1]} \dd F(y_2) 
\ldots \\ & \qquad 
\int_{(a_n, y_{k-2}]}
\p \Big( Y_k > \frac{a_n^k r}{y_{1:(k-1)}} \Big) 
\dd F(y_{k-1}).
\end{split}
\end{equation}

For $x > a_n$ and $i = 0,1,2, \ldots$, integration by parts gives
\[
\begin{split}
& \int_{(a_n, x]} y^\alpha (\log y)^{i} \dd F(y) \\
& = 
- \big[ y^\alpha (\log y)^i \overline F(y) \big]_{a_n}^x
+ \int_{a_n}^x \overline F(y) \big( \alpha y^{\alpha -1 } (\log y)^{i} 
+ y^{\alpha-1} i (\log y)^{i-1} \big) \dd y \\
& \leq c  \int_{a_n}^x \frac{(\log y)^{i}}{y} \dd y 
\leq c  \frac{(\log x)^{i+1}}{i+1}.
\end{split}
\]
Therefore, for the inner integral in \eqref{eq:iid-d-tail-aux1},
\[
\begin{split}
\int_{(a_n, y_{k-2}]}
\p \bigg( Y_k > \frac{a_n^k r}{y_{1:(k-1)}} \bigg) 
\dd F(y_{k-1}) 
& \sim a \int_{(a_n, y_{k-2}]} 
\bigg( \frac{a_n^k r}{y_{1:(k-1)}} \bigg)^{-\alpha} 
\dd F(y_{k-1}) \\
& \leq c \bigg( \frac{a_n^k r}{y_{1:(k-2)}} \bigg)^{-\alpha}
 \log y_{k-2} .
\end{split}
\]
Similarly,
\[
\begin{split}
& \int_{(a_n, y_{k-3}]}
\bigg( \frac{a_n^k r}{y_{1:(k-2)}} \bigg)^{-\alpha} 
\log y_{k-2}  \, \dd F(y_{k-2}) 
\leq c 
\bigg( \frac{a_n^k r}{y_{1:(k-3)}} \bigg)^{-\alpha} 
(\log y_{k-3})^2.
\end{split}
\]
Continuing, we arrive at
\[
\begin{split}
& \p ( Y_{1,n} \ldots Y_{k,n} > a_n^k r, 
Y_{1,n} \ldots Y_{k-1,n} \leq  a_n^{k-1} r )  \\
& \leq c n^k
\int_{(a_n r^{\frac1k}, a_n r]} 
\bigg( \frac{a_n^k r}{y_1} \bigg)^{-\alpha} 
(\log y_{1})^{k-2} \dd F(y_1) \\
& \leq 
c r^{-\alpha} 
(\log r)^{k-1},
\end{split}
\]
showing that \eqref{eq:iid-ind} holds for $k$. The proof is complete.
\end{proof}

\begin{remark}
The upper bound in Lemma \ref{lemma:iid-d-tail} is, in fact, precise.
By the point process convergence, we have
\[
n^{-\frac{1}{\alpha}} \left( Y_{1,n}, \ldots , Y_{k, n} \right)
\stackrel{\mathcal{D}}{\longrightarrow} 
\left( \Gamma_1^{-\frac{1}{\alpha}}, \ldots, \Gamma_k^{-\frac{1}{\alpha}}
\right),
\]
where $\Gamma_i = \varepsilon_1 + \ldots + \varepsilon_i$, $i = 1,2,\ldots, k$,
and $\varepsilon_1, \varepsilon_2, \ldots$ are iid standard exponential 
random variables.
Thus
\[
\lim_{n \to \infty} \p ( n^{-\frac{k}{\alpha}} Y_{1,n} \ldots Y_{k,n} > x) 
= \p ( (\Gamma_1 \ldots \Gamma_k)^{-1} > x^{\alpha} ).
\]
The representation
$\Gamma_k^{-1} ( \Gamma_1, \ldots, \Gamma_{k-1}) 
\stackrel{\mathcal{D}}{=} ( U_{1,k-1}, \ldots, U_{k-1,k-1} )$,
where $\Gamma_k$ is independent of the left-hand side, and 
the right-hand side is the order statistics of $k-1$ independent standard 
uniform random variables, implies that as $x \to \infty$
\[
\p ( (\Gamma_1 \ldots \Gamma_k)^{-1} > x^\alpha)
\sim \frac{\alpha^{k-1} (\log y)^{k-1}}{x^{\alpha} (k-1)!}.
\]
\end{remark}

Let $\tilde \xi, \tilde \xi_1, \ldots $ iid random points 
in $\R^d$ with density functions $\tilde f_{d,\beta}$ in \eqref{eq:tildef-def}.
Then, for the distribution of the length $|\tilde \xi|$, we have
\begin{equation*} %\label{eq:xi-asy}
\p ( |\tilde \xi | > x) = \omega_{d} \tilde c_{d,\beta}
\int_x^\infty r^{d-1} (1 + r^2)^{-\beta} \dd r \sim 
\frac{\omega_{d} \tilde c_{d,\beta}}{2\beta -d } x^{-(2\beta - d)}
\quad \text{as } x \to \infty.
\end{equation*}
In particular, the conditions of Lemma \ref{lemma:iid-d-tail} are
satisfied.
Let $|\tilde \xi_{1,n}| \geq \ldots 
\geq |\tilde \xi_{n,n}|$ denote the order statistics of 
$|\tilde \xi_1|, \ldots, |\tilde \xi_n |$.
Then 
\[
\Vol_n(\widetilde K_n^{\beta}) \leq 2^d |\tilde \xi_{1,n} | \ldots 
|\tilde \xi_{d,n} |.
\]
Therefore, as an immediate consequence of Lemma \ref{lemma:iid-d-tail}, 
we have the following.

\begin{corollary} \label{cor:X_n-unifint}
Let $\alpha = 2 \beta -d$. There exists $c > 0$ such that for any $x > 0$
\[
\sup_n \p ( \Vol_d(n^{-\frac{1}{\alpha}} \tilde K_n^{\beta})) >  x) 
\leq c \frac{(\log x)^{d-1}}{x^\alpha}.
\]
In particular, the sequence 
$((\Vol_d(n^{-\frac{1}{\alpha}}\tilde K_n^{\beta}))^m)_{n}$
is uniformly integrable for $m < \alpha$.
\end{corollary}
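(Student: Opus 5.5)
The tail bound follows by combining the deterministic volume bound stated just before the corollary with Lemma~\ref{lemma:iid-d-tail}. Uniform integrability then follows from a standard tail-integration argument.

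\emph{Step 1: reduction to order statistics.} I would first justify
\[
\Vol_d(\widetilde K_n^\beta)\le 2^d\,|\tilde\xi_{1,n}|\cdots|\tilde\xi_{d,n}|,
\]
reading $\Vol_n$ in the display as $\Vol_d$; the same argument gives \eqref{eq:Poi-d-upper}. Every point of $\widetilde K_n^\beta$ lies in $\conv$ of the points, and any $d$-simplex spanned by points of norms at most $R_1\ge R_2\ge\dots$ has its extent controlled by these norms. The cleanest route is to note that a polytope is a union of simplices with vertices among the points. Fix a simplex with vertices $v_0,\dots,v_d$ ordered by decreasing norm. Its volume is at most $\frac1{d!}\prod_{j=1}^d|v_j-v_0|\le \frac{2^d}{d!}\prod_{j=1}^{d}|v_{j-1}|$. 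This product is at most $\frac{2^d}{d!}|\tilde\xi_{1,n}|\cdots|\tilde\xi_{d,n}|$ because the $j$th largest norm among the vertices is at most the $j$th largest overall. However, the number of simplices is not bounded, so this union bound alone is insufficient.

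Instead I would use a bounding-box argument. Let $u_1,\ldots,u_d$ be chosen greedily: $u_1$ maximizes the width, $u_2$ the width orthogonal to $u_1$, and so on. The extent of the polytope in direction $u_j$ is then bounded by twice the $j$th largest norm. This is the paper's asserted inequality, and I would take it as given, as the text does.

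\emph{Step 2: apply the lemma.} Set $Y=|\tilde\xi|$. By the computed asymptotics, $\p(Y>x)\sim a x^{-\alpha}$ with $\alpha=2\beta-d$, so Lemma~\ref{lemma:iid-d-tail} with $k=d$ applies. For $d=1$ the case is just $Y_{1,n}$. Here $\sup_n\p(Y_{1,n}>n^{1/\alpha}x)\le \sup_n n\,\p(Y>n^{1/\alpha}x)\le Cx^{-\alpha}$, because $\p(Y>t)\le C't^{-\alpha}$ for all $t>0$. Since $n^{-1/\alpha}$ scaling multiplies volume by $n^{-d/\alpha}$, we get
\[
\p\big(\Vol_d(n^{-\frac1\alpha}\widetilde K_n^\beta)>x\big)\le \p\big(Y_{1,n}\cdots Y_{d,n}>n^{\frac d\alpha}2^{-d}x\big)\le c\frac{(\log(2^{-d}x))^{d-1}}{(2^{-d}x)^{\alpha}},
\]
uniformly in $n$. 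This is at most $c'(\log x)^{d-1}x^{-\alpha}$ for large $x$. For $x$ near $1$ or below, the bound with $(\log x)^{d-1}$ is degenerate, so I would state it for $x\ge x_0$ and absorb small $x$ into the constant. Probabilities are at most $1$, so this only requires interpreting the bound for $x\ge e$, say.

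\emph{Step 3: uniform integrability.} For $m<\alpha$, choose $p>1$ with $mp<\alpha$. Then
\[
\sup_n\E\big[\Vol_d(n^{-\frac1\alpha}\widetilde K_n^\beta)^{mp}\big]=\sup_n\int_0^\infty mp\,x^{mp-1}\p(\cdot>x)\,\dd x\le C+C\int_e^\infty x^{mp-1-\alpha}(\log x)^{d-1}\dd x<\infty,
\]
and a bounded $p$-th moment with $p>1$ gives uniform integrability of the $m$-th powers.

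The main obstacle is Step 1, the deterministic volume inequality. Everything else is immediate from Lemma~\ref{lemma:iid-d-tail}.
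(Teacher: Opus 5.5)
Your proposal is correct and follows the paper's argument: the deterministic bound $\Vol_d(\widetilde K_n^\beta)\le 2^d|\tilde\xi_{1,n}|\cdots|\tilde\xi_{d,n}|$, which the paper also only asserts, is fed into Lemma~\ref{lemma:iid-d-tail} with $k=d$, and your treatment of $d=1$, of small $x$, and of uniform integrability via a bounded $mp$-th moment fills in details the paper leaves implicit. To close Step~1 rather than take it as given, choose a hyperplane $E$ containing the $d-1$ points of largest norm; then $K=\widetilde K_n^\beta$ satisfies $K\subset (K|E)+(K|E^\perp)$, the segment $K|E^\perp$ has length at most $2|\tilde\xi_{d,n}|$, and induction on the dimension gives the constant $2^d$, since projection does not increase the ordered norms.
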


Using Kubota's formula, the uniform integrability of the 
intrinsic volumes also follows.

\begin{lemma} \label{lemma:intrinsic-unifint}
For any $m < \alpha$ and $s = 1,\ldots, d$, the sequence 
$\left( V_s(n^{-\frac{1}{\alpha}} \widetilde K_n^\beta)^m \right)_{n \geq 1}$
is uniformly integrable.
\end{lemma}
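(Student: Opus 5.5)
Write $K_n = n^{-\frac1\alpha}\widetilde K_n^\beta$. The plan is to reduce uniform integrability of $V_s(K_n)^m$ to a tail bound of the same type as Corollary~\ref{cor:X_n-unifint}, obtained by applying Lemma~\ref{lemma:iid-d-tail} with $k=s$. Recall that a family with $\sup_n \p(Z_n>x) \le c(\log x)^{s-1}x^{-\alpha}$ has $(Z_n^{m})$ uniformly integrable for every $m<\alpha$, since $\sup_n \E[Z_n^{m'}]<\infty$ for some $m'\in(m,\alpha)$.

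The geometric step is to bound, for every $A\in G(d,s)$,
\[
\Vol_s(K_n|A) \le 2^s R_1\cdots R_s,
\]
where $R_1\ge \ldots\ge R_n$ are the order statistics of $n^{-\frac1\alpha}|\tilde\xi_i|$. The projection $K_n|A$ is the convex hull of the projected points, and each projected point has norm at most the norm of the original point. So it suffices to prove the elementary fact behind \eqref{eq:Poi-d-upper} and the displayed bound before Corollary~\ref{cor:X_n-unifint}, now in dimension $s$: the convex hull of finitely many points $z_1,\dots,z_n$ in $\R^s$ with $|z_{(1)}|\ge|z_{(2)}|\ge\ldots$ has volume at most $2^s|z_{(1)}|\cdots|z_{(s)}|$.

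To prove this fact I would use the standard Gram--Schmidt box argument. Choose $w_1=z_{(1)}$. Then iteratively let $w_j$ maximize the distance to $\mathrm{span}(w_1,\dots,w_{j-1})$, and call that distance $h_j$. All points lie in a box with side lengths $2h_1,\dots,2h_s$, and $h_j$ is at most the $j$th largest norm. The last claim holds because $h_j\le |w_j|$, and the $w_i$ are distinct points, so among $w_1,\dots,w_j$ some $w_i$ has norm at most $|z_{(j)}|$. Monotonicity of $h_j$ then gives $h_j\le h_i\le|w_i|\le |z_{(j)}|$. This is where the main care is needed; I would present it briefly, since the paper already uses the $s=d$ case without comment.

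Integrating over $A$ with Kubota's formula \eqref{eq:Kubota} gives $V_s(K_n)\le C_{d,s}2^s R_1\cdots R_s$. Lemma~\ref{lemma:iid-d-tail} with $k=s$, applied to $Y_i=|\tilde\xi_i|$ (its tail hypothesis was verified above), yields
\[
\sup_n\p(V_s(K_n)>x)\le c\,\frac{(\log x)^{s-1}}{x^\alpha}.
\]
The case $s=1$ is handled by the $k=1$ base case of \eqref{eq:iid-ind}. Finally, $\E[V_s(K_n)^{m'}]=\int_0^\infty m' x^{m'-1}\p(V_s(K_n)>x)\,\dd x$ is bounded uniformly in $n$ for $m<m'<\alpha$, because the integrand is bounded near $0$ and is $O(x^{m'-1-\alpha}(\log x)^{s-1})$ at infinity. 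This uniform moment bound gives the uniform integrability of $V_s(K_n)^m$.
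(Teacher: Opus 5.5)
Your proof is correct, but it takes a different route from the paper's. The paper argues at the level of distributions. Jensen's inequality applied to Kubota's formula gives $\E[V_s(\widetilde K_n^\beta)^m]\le C_{d,s}^m\int_{G(d,s)}\E[\Vol_s(\widetilde K_n^\beta|A)^m]\,\nu_s(\dd A)$. Rotation invariance, together with the cited fact that the projection of $\tilde f_{d,\beta}$ onto an $s$-dimensional subspace is $\tilde f_{s,\beta-\frac{d-s}{2}}$, then identifies the right-hand side with the $m$th moment of the volume of an $s$-dimensional beta-prime polytope. Since $2(\beta-\frac{d-s}{2})-s=\alpha$, Corollary~\ref{cor:X_n-unifint} in dimension $s$ finishes the job. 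You argue pathwise instead. Projection does not increase norms, so the $2^s$-product volume bound in $\R^s$ holds uniformly in $A$; you prove that bound by the Gram--Schmidt box argument, whereas the paper uses its $s=d$ case without proof. Since $\nu_s$ is a probability measure, you get the almost sure bound $V_s(K_n)\le C_{d,s}2^sR_1\cdots R_s$, to which Lemma~\ref{lemma:iid-d-tail} with $k=s$ applies directly in the ambient dimension. What your route buys: it needs neither rotation invariance nor any information about projected beta-prime laws, so it works for arbitrary iid points whose norms have a tail $\sim a x^{-\alpha}$. It yields a uniform tail bound $\sup_n\p(V_s(K_n)>x)\le c(\log x)^{s-1}x^{-\alpha}$ rather than only a moment inequality. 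It also covers every $0<m<\alpha$, including the case $\alpha\le 1$, which the paper's Jensen step does not reach, since that step needs convexity of $t\mapsto t^m$ and is stated only for $1<m<\alpha$. You also make explicit the passage through a higher moment $m'\in(m,\alpha)$ that uniform integrability needs. In exchange, the paper's route is shorter given the projection lemma, reuses the volume case verbatim, and exhibits the identity $\E[V_s(\widetilde K_n^\beta)]=C_{d,s}\E[\Vol_s(\widetilde K_{n,s}^{\beta-\frac{d-s}{2}})]$. One small detail in your box argument: the points $w_1,\dots,w_j$ are guaranteed distinct only when $h_j>0$, since then $h_i\ge h_j>0$ forces $w_i\notin\mathrm{span}(w_1,\dots,w_{i-1})$. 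If $h_j=0$ the inequality $h_j\le|z_{(j)}|$ is trivial, so nothing breaks.
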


\begin{proof}
By Kubota's formula \eqref{eq:Kubota}, and the rotation invariance of the beta-prime distribution, we obtain that
\[
\E[V_s(\widetilde K_n^\beta)]=\E\Bigl [C_{d,s}\int_{G(d,s)}\Vol_s(\widetilde K_n^\beta|A)\nu_s(\dd A)\Bigr]
=C_{d,s} \E[\Vol_s(\widetilde K_n^\beta|A_0)],
\]
where $C_{d,s}$ is the constant from Kubota's formula, and $A_0$ is an $s$-dimensional linear subspace of $\R^d$. Let $A_0$ be identified with $\R^s$.

The probability density function of $\tilde\xi|A_0$ is $\tilde f_{s,\beta-\frac{d-s}2}$ (see \cite{KTT19}*{Lemma 4.4}).
Thus, 
\[
\E\big[\Vol_s\big(\widetilde K_n^\beta|A_0\big)\big]=\E\Big[\Vol_s\Big(\widetilde K_{n,s}^{\beta-\frac{d-s}2}\Big)\Big],
\]
and 
\[
\E[V_s(\widetilde K_n^\beta)] = C_{d,s}\E\Big[\Vol_s\Big(\widetilde K_{n,s}^{\beta-\frac{d-s}2}\Big)\Big].
\]

Similarly, for the moments of $V_s(\widetilde K_n^\beta)$, applying Jensen's inequality and Fubini's theorem, we have for any $1 < m < 2 \beta - d = \alpha$
\begin{align*}
\E \big[ V_s(\widetilde K_n^\beta)^m \big]
&=\E\bigg[ \Big(C_{d,s}\int_{G(d,s)}\Vol_s(\widetilde K_n^\beta|A)\nu_s(\dd A)
\Big)^m\bigg] \nonumber \\
&\leq C_{d,s}^m
\E\bigg[\int_{G(d,s)}
\Vol_s(\widetilde K_n^\beta|A)^m \, \nu_s(\dd A)\bigg] \nonumber \\
&= C_{d,s}^m 
\E \bigg[ 
\Vol_s\Big( \widetilde K_{n,s}^{\beta-\frac{d-s}2}\Big)^m \bigg].%\label{eq:vs-var}
\end{align*}
Note the dependence on the dimension in the last formula.
Therefore,
\[
\begin{split}
\E ( V_s ( n^{-\frac{1}{\alpha}} \widetilde K_n^{\beta} )^m ) 
& = n^{-\frac{ms}{\alpha}} \E ( V_s(\widetilde K_n^{\beta})^m ) \\
& \leq C_{d,s}^m n^{-\frac{ms}{\alpha}} 
\E ( \Vol_s(\widetilde K_{n,s}^{\beta-\frac{d-s}{2}})^m ) \\
& = C_{d,s}^m  \E ( 
\Vol_s( n^{-\frac{1}{\alpha}} \widetilde K_{n,s}^{\beta-\frac{d-s}{2}})^m ).
\end{split}
\]
The statement follows from the fact that 
the sequence 
$( \Vol_s (n^{-\frac{1}{\alpha}} 
\widetilde K_{n,s}^{\beta-\frac{d-s}2})^m)_{n \geq 1}$ is uniformly integrable 
by Corollary \ref{cor:X_n-unifint}, whenever 
$m < 2 (\beta - \frac{d-s}{2}) - s = 2 \beta -d = \alpha$.
\end{proof}

\begin{proof}[Proof of Theorem \ref{thm:moments}.]
Since the volume and all intrinsic volumes are continuous mappings 
in the Hausdorff metric, the continuous 
mapping theorem and \eqref{eq:convhull-conv} imply that 
for all $s = 1,\ldots, d$
\[
V_s(n^{-\frac{1}{\alpha}} \widetilde K_n^{\beta}) \stackrel{\mathcal{D}}{\longrightarrow}
V_s(\conv(\Pi_{d, \alpha})).
\]
By Lemma \ref{lemma:intrinsic-unifint} the sequence 
$(V_s(n^{-\frac{1}{\alpha}} \widetilde K_n^{\beta}))^m$ is uniformly integrable for any $m < \alpha$, thus the statement follows from the moment convergence theorem.
\end{proof}

\begin{lemma}\label{lemma:spherical-iid}
Let 
\begin{equation*} %\label{eq:Y-n-def}
Y_n = n^{\frac{d+1}{\alpha}} 
\int_{\big(n^{-\frac1\alpha} \widetilde K_n^\beta\big)^c}
\Big(1 + \big| y n^{\frac1\alpha} \big|^2\Big)^{-\frac{d+1}{2}} \dd y.
\end{equation*}
Then, there exists $c, C > 0$ such that for all $y > 0$
\[
\p ( Y_n > y) \leq C e^{-c y^{\alpha}}.
\]
In particular, the sequence $(Y_n^m)_{n \geq 1}$ is uniformly integrable
for any $m > 0$.
\end{lemma}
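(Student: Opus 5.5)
We mimic the proof of Lemma~\ref{lemma:PPP-sphericalvol}, replacing the Poisson process by the iid sample. First we substitute $z = y n^{\frac1\alpha}$ in the definition of $Y_n$:
\[
Y_n = n^{\frac{1}{\alpha}} \int_{(\widetilde K_n^\beta)^c} (1+|z|^2)^{-\frac{d+1}{2}} \dd z ,
\]
which is the quantity on the left of Theorem~\ref{thm:spherical}. Set $U_n = \mathrm{dist}\big(0, (n^{-\frac1\alpha}\widetilde K_n^\beta)^c\big)$. Since $(1+|w|^2)^{-\frac{d+1}{2}} \le |w|^{-d-1}$, the same computation as in \eqref{eq:Y-bound} gives
\[
Y_n \le \int_{(U_n B^d)^c} |y|^{-d-1}\,\dd y = \omega_d U_n^{-1}.
\]
It therefore suffices to prove that $\p(U_n \le r) \le C e^{-c r^{-\alpha}}$ uniformly in $n$ and $r>0$. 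Taking $r = \omega_d/y$ then gives the claim.

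For the bound on $U_n$ we use the deterministic geometric inclusion \eqref{eq:conv-bound-aux}. It is a statement about arbitrary point sets, so it holds for the rescaled sample as well. If $rB^d \not\subset n^{-\frac1\alpha}\widetilde K_n^\beta$, then some cone $C(\pm e_i,\theta)$ contains no point $n^{-\frac1\alpha}\tilde\xi_j$ outside $c_\theta r B^d$. Let
\[
p_n(r) = \p\big(\tilde \xi \in C(e_1,\theta)\setminus c_\theta r n^{\frac1\alpha} B^d\big).
\]
By rotation invariance and independence,
\[
\p(U_n\le r) \le 2d\,(1-p_n(r))^n \le 2d\, e^{-n p_n(r)} .
\]

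The main step is a uniform lower bound $n p_n(r) \ge c\, r^{-\alpha}$, valid for all $n\ge1$ and all $r$ in the relevant range. We write
\[
p_n(r) = \frac{\gamma(\theta,d)}{\omega_d}\,\p\big(|\tilde\xi| > c_\theta r n^{\frac1\alpha}\big).
\]
The radial tail satisfies $\p(|\tilde\xi|>x) \ge c' \min(1, x^{-\alpha})$ for all $x>0$: it is asymptotic to a constant times $x^{-\alpha}$ at infinity and is continuous and positive elsewhere. Hence
\[
n p_n(r) \ge c \min\big(n, r^{-\alpha}\big).
\]
For $r \ge n^{-\frac1\alpha}$ this gives $n p_n(r) \ge c\, r^{-\alpha}$, as needed. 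For $r < n^{-\frac1\alpha}$ it only gives $e^{-cn}$, and this is the delicate case. Here we work with the tail bound directly. The claim is only needed for $y = \omega_d/r$ large. Moreover, $Y_n \le n^{\frac1\alpha}\,\omega_{d+1}/2$ deterministically, because the $\psi$-measure of $\R^d$ is half the sphere. So $\p(Y_n>y)=0$ once $y > n^{\frac1\alpha}\omega_{d+1}/2$, i.e.\ roughly when $r < C n^{-\frac1\alpha}$. In the intermediate window, $r \asymp n^{-\frac1\alpha}$ up to constants, we have $e^{-cn} \le e^{-c'' r^{-\alpha}}$. Adjusting the constants handles everything.

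Combining the two ranges yields $\p(Y_n>y)\le C e^{-c y^\alpha}$ with constants independent of $n$. Uniform integrability of $Y_n^m$ for every $m>0$ follows, since $\sup_n \E[Y_n^{m+1}] \le \int_0^\infty (m+1) y^m C e^{-c y^\alpha}\,\dd y<\infty$.
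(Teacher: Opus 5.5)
Your proof is correct and follows the paper's argument: bound $Y_n$ by a constant times $U_n^{-1}\wedge n^{\frac1\alpha}$, control $\p(U_n\le r)$ through the cone inclusion \eqref{eq:conv-bound-aux} and the estimate $(1-c r^{-\alpha}/n)^n\le e^{-c r^{-\alpha}}$ for $r\ge c\, n^{-\frac1\alpha}$, and let the deterministic cap $Y_n\le n^{\frac1\alpha}\omega_{d+1}/2$ take care of smaller $r$. Your derivations are somewhat cleaner than the paper's sketch, which also invokes an unnecessary $(1-\delta)^n$ estimate: you use the pointwise bound $(1+|w|^2)^{-\frac{d+1}{2}}\le |w|^{-d-1}$, the total $\psi$-mass of $\R^d$, and the explicit uniform bound $n p_n(r)\ge c\min(n, r^{-\alpha})$.
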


\begin{proof}
As in the Poisson case, define
\[
U_n = \mathrm{dist}\big(0, \big(n^{-\frac1\alpha} \widetilde K_n\big)^c\big).
\]
Then 
\[
\begin{split}
Y_n & \leq c n^{\frac{d+1}{\alpha}} \int_{U_n}^\infty 
y^{d-1} \Big( 1 + \big(y n^{\frac1\alpha} \big)^2 \Big)^{- \frac{d+1}{2}} 
\dd y \\
& \leq c n^{\frac{d+1}{\alpha}} 
\big(  
\ind\big(U_n > n^{-\frac1\alpha}\big) + 
\ind\big( U_n \leq n^{-\frac1\alpha}\big) \big)
\int_{U_n}^\infty
y^{d-1} \Big( 1 + \big( y n^{\frac1\alpha} \big)^2 \Big)^{- \frac{d+1}{2}} 
\dd y \\
& = c n^{\frac{d+1}{\alpha}} ( I_1 + I_2).
\end{split}
\]
Simply, 
\[
\begin{split}
I_1 \leq \ind\big(U_n > n^{-\frac1\alpha}\big) \int_{U_n}^\infty 
y^{d-1} \big( y n^{\frac1\alpha} \big)^{- {d-1}} \dd y 
\leq \ind\big(U_n > n^{-\frac1\alpha}\big) c n^{-\frac{d+1}{\alpha}} U_n^{-1}.
\end{split}
\]
and 
\[
\begin{split}
I_2 \leq \ind \big(U_n \leq n^{-\frac1\alpha}\big)  
\left( \int_{0}^{n^{-\frac1\alpha}} y^{d-1} \dd y
+ cn^{-\frac{d}{\alpha}} \right)
\leq c \ind\big(U_n \leq n^{-\frac1\alpha}\big) n^{-\frac{d}{\alpha}}.
\end{split}
\]
Summarizing,
\[
Y_n \leq c \Big( U_n^{-1} \wedge n^{\frac1\alpha} \Big).
\]
Next, for $r \geq c n^{-\frac1\alpha}$, by \eqref{eq:conv-bound-aux}
\[
\begin{split}
\p ( U_n \leq r) & \leq 2d \Big( \p \Big( \xi \not \in C(e_1, \theta) \cap 
\big(c _\theta r n^{\frac1\alpha}B^d\big)^c \Big) \Big)^n \\
& \leq c \bigg( 1 - \frac{c r^{-\alpha}}{n} \bigg)^n
% \\ & 
\leq c e^{-c r^{-\alpha}},
\end{split}
\]
while for some $\delta > 0$
\[
\p \big( U_n \leq c n^{-\frac1\alpha} \big) \leq ( 1- \delta)^n.
\]
and the statement follows as in the Poisson case.
\end{proof}

As a consequence, we obtain Theorem \ref{thm:spherical}.

\begin{proof}[Proof of Theorem \ref{thm:spherical}.]
The distributional convergence \eqref{eq:distr-conv},  Lemma \ref{lemma:intrinsic-unifint},
and the moment convergence theorem imply the statement.
\end{proof}

\section*{Acknowledgments}
This research was supported by NKFIH project no.~150151. Project no.~150151 has been implemented with the support provided by the Ministry of Culture and Innovation of Hungary from the National Research, Development and Innovation Fund, financed under the ADVANCED\_24 funding scheme.

The second author was also supported by the University Research Scholarship Programme (EK\"OP) no.~EK\"OP-452-SZTE, which has been implemented with the support provided by the Ministry of Culture and Innovation and the National Research, Development and Innovation Fund.

\end{document}